\DeclareMathOperator{\Hom}{Hom}
\DeclareMathOperator{\stHom}{\underline{\Hom}}
\DeclareMathOperator{\sthom}{\underline{\hom}}
\DeclareMathOperator{\Ext}{Ext}
\DeclareMathOperator{\ext}{ext}
\DeclareMathOperator{\Endo}{End}
\DeclareMathOperator{\stEndo}{{\underline{\Endo}}}
\DeclareMathOperator{\gldim}{{gl.dim}}
\DeclareMathOperator{\aut}{{Aut}}
\DeclareMathOperator{\add}{{add}}
\DeclareMathOperator{\ind}{{ind}}
\DeclareMathOperator{\silt}{{silt}}
\DeclareMathOperator{\ogr}{gr}
\mathchardef\mhyphen="2D
\DeclareMathOperator{\ormod}{mod}
\newcommand{\gr}[1][]{\ogr^{#1 \!} \mhyphen}
\newcommand{\stgr}[1][]{\underline{\ogr}^{#1 \!} \mhyphen}
\newcommand{\rmod}[1][]{\ormod \mhyphen #1}
\newcommand{\stmod}[1][]{\underline{\ormod} \mhyphen #1}
\DeclareMathOperator{\proj}{\operatorname{proj} \mhyphen \!}
\newcommand{\ul}[1]{\underline{#1}}
\newcommand{\gen}[1]{\langle #1 \rangle}
\newcommand{\lra}[1][]{\stackrel{#1}{\longrightarrow}}
\newcommand{\C}{\mathcal{C}}
\newcommand{\D}{\mathcal{D}}
\newcommand{\M}{\mathcal{M}}
\newcommand{\T}{\mathcal{T}}
\newcommand{\bL}{\mathbb{L}}
\newtheorem{therm}{Theorem}[section]
\newtheorem{defin}[therm]{Definition}
\newtheorem{propos}[therm]{Proposition}
\newtheorem{lemma}[therm]{Lemma}
\newtheorem{coro}[therm]{Corollary}
\newtheorem{rmk}[therm]{Remark}
\newtheorem{example}[therm]{Example}
\newtheorem{quest}{Question}
\begin{document}

\title{Some algebras that are not silting connected}
\author{Alex Dugas}
\address{Department of Mathematics, University of the Pacific, 3601 Pacific Ave, Stockton CA 95211, USA}
\email{adugas@pacific.edu}


\begin{abstract} We give examples of finite-dimensional algebras $A$ for which the silting objects in $K^b(\proj A)$ are not connected by any sequence of (possibly reducible) silting mutations.  The argument is based on the fact that silting mutation preserves invariance under twisting by a fixed algebra automorphism, combined with the existence of spherical modules that are not invariant under such a twist.
\end{abstract}

\maketitle
\section{Introduction}



In \cite{AI}, Aihara and Iyama develop the theory of mutation for silting subcategories of a triangulated category.  One of the principal settings in which silting mutation is of interest is the category $K^b(\proj A)$ of perfect complexes over a finite-dimensional algebra $A$.  Here, silting complexes provide a nice generalization of tilting complexes that are better behaved under mutation.  In particular, it is always possible to mutate a silting complex at any one of its direct summands to obtain a new silting complex; whereas the same is not necessarily true when one considers only tilting complexes.  The action of irreducible silting mutation on the set of silting objects in $K^b(\proj A)$ can be visualized by the silting quiver of $A$, which also coincides with the Hasse diagram for a natural partial order on the set of silting objects.

  Aihara and Iyama propose the problem of determining which algebras have a connected silting quiver.
  Such algebras have come to be termed {\it silting connected} \cite{Ai}.   While several classes of algebras -- including representation finite symmetric algebras, local algebras and piecewise hereditary algebras -- are known to be silting connected, less is known about which algebras fail to be silting connected.  Two examples of symmetric algebras that are not silting connected, one originally discovered by Aihara, Grant and Iyama, appear in \cite[\S 6.3]{DIJ}.  However, in each example the silting objects are all linked by combinations of irreducible silting mutation and powers of the suspension, which is in fact a reducible silting mutation corresponding to the zero summand of the silting object.  Aihara and Iyama state that they are aware of no algebras $A$ where (not necessarily irreducible) iterated silting mutations do not act transitively on the set of basic silting objects in $K^b(\proj A)$.  This sparsity of known examples is likely due to the difficulty in showing that {\it no} possible sequence of mutations can change one silting complex into another; as opposed to such examples being uncommon.

  The purpose of this note is to present a family of examples where iterated silting mutation does not act transitively on the set of silting objects in $K^b(\proj A)$.  In fact, we see that the silting quivers in our examples will have infinitely many connected components, even if one includes edges for reducible silting mutations.  Our proof that these algebras are not silting connected makes use of a rather elementary observation (Proposition~\ref{prop:InvariantSilting}), which says that if each indecomposable object in a silting subcategory $\M$ is invariant under an automorphism of the ambient triangulated category $\T$, then the same is true for any mutation of $\M$.  We apply this fact to the algebras under consideration by showing that they admit spherical modules, and hence tilting complexes associated to the corresponding spherical twists, that are not invariant under such an automorphism.

\section{Silting Mutation}

In this section we review the definition of silting mutation in a triangulated category $\T$ due to Aihara and Iyama \cite{AI}, and show that the class of silting subcategories in which every object is invariant under a fixed automorphism of $\T$ is stable under mutation.

Throughout this section $\T$ will be a triangulated category, with suspension functor denoted $[1]$.  When we speak of a subcategory of $\T$ we shall always mean a strict, full subcategory closed under finite direct sums and direct summands.  Recall that a subcategory $\M$ of $\T$ is {\bf silting} if $\T(\M,\M[>0]) = 0$ and $\M$ generates $\T$ as a triangulated category.  We let $\silt(\T)$ denote the collection of all silting subcategories of $\T$.  If $\D$ is a covariantly (resp. contravariantly) finite subcategory of $\M$, then the left (resp. right) mutation of $\M$ at $\D$ is defined as the subcategory 
$$\mu^+(\M;\D) = \add (\D \cup \{N_M \mid M \in \M\}) \ \
 (\mbox{resp.}\ \mu^-(\M;\D) = \add(\D \cup \{L_M \mid M \in \M\}))$$
where $N_M$ (resp. $L_M$) is defined as the cone (resp. co-cone) of a left (resp. right) $\D$-approximation of $M$.  In other words, $N_M$ and $L_M$ are defined by distinguished triangles of the form
$$M \stackrel{f}{\to} D \to N_M \to M[1] \ \ \mbox{and} \ \ L_M \to D' \stackrel{f'}{\to} M \to L_M[1]$$
where $D, D' \in \D$ and $f$ and $f'$ are left and right $\D$-approximations, respectively.  An important point here is that these mutated subcategories do not depend on the choices of $\D$-approximations in their definition.

Aihara and Iyama show that for any silting subcategory $\M$ and any covariantly (resp. contravariantly) finite subcategory $\D$, the mutation $\mu^+(\M;\D)$ (resp. $\mu^-(\M,\D)$) is also silting and satisfies 
$$\mu^-(\mu^+(\M;\D);\D) = \M \ \ \ (\mbox{resp.}\ \mu^+(\mu^-(\M;\D);\D) = \M).$$

In addition, if $\T$ is a Krull-Schmidt triangulated category, then Aihara and Iyama define {\bf irreducible} silting mutations of a silting subcategory $\M$ as those mutations with respect to a subcategory $\D$ for which $\M \setminus \D$ contains a unique indecomposable object (up to isomorphism).  As usual, we write $\ind \C$ for the set of isomorphism classes of indecomposable objects in a category $\C$.  Under the assumption 
\begin{equation}\tag{F}\parbox{\dimexpr\linewidth-4em}{$\T$ is Krull-Schmidt and for any silting subcategory $\M$ of $\T$ and any $X \in \ind \M$, the subcategory $\M_X := \add( (\ind\ \M) \setminus X)$ is functorially finite in $\M$;}\end{equation}
irreducible left and right silting mutations are defined for all silting subcategories of $\T$.  This assumption holds, in particular, if $\T$ is a Hom-finite $k$-category (over a field $k$) and has a silting object.  In this case, we abbreviate $\mu^\pm(\M;\M_X)$ by $\mu^\pm_X(\M)$ for indecomposable objects $X$ in $\M$.

Finally, recall the definition of the silting quiver $Q(\silt(\T))$ of $\T$, under the assumption (F).  Its vertices are the (equivalence classes of) basic silting objects in $\T$, and each $M = \oplus_{i=1}^r M_i \in \silt \T$ has arrows to $\mu^+_{M_i}(M)$ for each $1 \leq i \leq r$.  Thus, two silting objects are in the same connected component of this quiver if and only if they are linked by iterated irreducible (left or right) silting mutation.  We point out that the silting quiver of $\T$ only conveys information about {\it irreducible} silting mutation in $\T$, and the silting connectedness of $\T$ usually refers to the connectedness of this quiver, i.e., to the connectedness of $\silt(\T)$ under irreducible mutation.  In this paper, however, we are really interested in the connectedness of $\silt(\T)$ with respect to all mutations.  Thus we will consider the {\it extended silting quiver} of $\T$, obtained from $Q(\silt(\T))$ by adding arrows $M \to \mu^+(M;\D)$ for each silting object $M$ and each subcategory $\D = \add(M')$ for a basic direct summand $M'$ of $M$.


Now consider an automorphism $\alpha$ of $\T$.  We say that $X \in \T$ is $\alpha$-{\bf invariant} if $\alpha X \cong X$.  We say that a subcategory $\C$
 of $\T$ is $\alpha$-{\bf invariant} if each object $X \in \C$ is $\alpha$-invariant.  

\begin{propos}\label{prop:InvariantSilting}  Let $\T$ be a Krull-Schmidt category 
and let $\M$ be a silting subcategory of $\T$ that is $\alpha$-invariant.  Then for any covariantly (resp. contravariantly) finite subcategory $\D$ of $\M$ the silting subcategory $\mu^+(\M;\D)$ (resp. $\mu^-(\M;\D)$) is also $\alpha$-invariant.  
\end{propos}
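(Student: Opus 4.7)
The plan is to handle the left mutation case, with right mutation being formally dual. The core step is to show that for each $M \in \M$ the cone $N_M$ of a left $\D$-approximation is $\alpha$-invariant; combined with the $\alpha$-invariance of $\D \subseteq \M$, this should yield the $\alpha$-invariance of the whole subcategory $\mu^+(\M;\D) = \add(\D \cup \{N_M \mid M \in \M\})$.

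The key steps are as follows. First, fix $M \in \M$ and a left $\D$-approximation $f : M \to D$, giving the triangle $M \stackrel{f}{\to} D \to N_M \to M[1]$. Applying the triangle autoequivalence $\alpha$ produces $\alpha M \stackrel{\alpha f}{\to} \alpha D \to \alpha N_M \to \alpha M[1]$. Since both $M$ and $D$ lie in $\M$ and are therefore $\alpha$-invariant, I fix isomorphisms $\phi : \alpha M \to M$ and $\psi : \alpha D \to D$ and conjugate to obtain a triangle $M \stackrel{f'}{\to} D \to \alpha N_M \to M[1]$ with $f' = \psi \circ \alpha f \circ \phi^{-1}$. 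Second, I verify that $f'$ is itself a left $\D$-approximation of $M$: for any $D'' \in \D$ and any $g : M \to D''$, applying $\alpha^{-1}$ and using the isomorphisms $M \cong \alpha^{-1} M$, $D'' \cong \alpha^{-1} D''$ (the latter because $\D$ is $\alpha$-invariant and hence also $\alpha^{-1}$-invariant) converts the factorization problem into one already solvable by $f$, and transporting the solution back through $\alpha$ and $\psi$ produces a factorization of $g$ through $f'$. Third, I invoke the fact emphasized just after the definition of mutation in the text, namely that the cone of a left $\D$-approximation is determined up to isomorphism independently of the approximation chosen, which forces $\alpha N_M \cong N_M$.

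To extend from individual $N_M$'s to the whole subcategory, note that every object of $\mu^+(\M;\D)$ is a direct summand of some $D_0 \oplus \bigoplus_j N_{M_j}$ with $D_0 \in \D$ and $M_j \in \M$, and such a sum is $\alpha$-invariant by what was just shown together with the $\alpha$-invariance of $\D$. One may restrict the $M_j$ to indecomposables since left $\D$-approximations split over direct sums, giving $N_{M_1 \oplus M_2} \cong N_{M_1} \oplus N_{M_2}$, and indecomposable summands in the resulting $\add$-closure are then tracked across $\alpha$ using the Krull-Schmidt hypothesis.

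The step I expect to be the main obstacle is the second one: checking that $f' = \psi \circ \alpha f \circ \phi^{-1}$ inherits the left $\D$-approximation property from $f$. This is where the autoequivalence structure of $\alpha$ must be combined with the object-level $\alpha$-invariance of $\D$ (which in particular makes $\D$ closed under $\alpha^{-1}$) to move factorization problems back and forth across $\alpha$. Once that step is in hand, the first and third steps are formal consequences of $\alpha$ being a triangle functor together with the uniqueness of the cone, and the passage from the generating set to the additive closure is routine Krull-Schmidt bookkeeping.
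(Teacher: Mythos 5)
Your overall strategy is the same as the paper's: apply $\alpha$ to the approximation triangle, use $\alpha$-invariance of $M$ and of $\D$ to produce a commuting square, and deduce that the cone is $\alpha$-invariant. However, two points deserve scrutiny, and one of them is a genuine gap.

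First, the intermediate step: you replace $\alpha f$ by $f' = \psi\circ\alpha f\circ\phi^{-1}\colon M\to D$ and then appeal to ``the cone of a left $\D$-approximation is determined up to isomorphism independently of the approximation chosen.'' The text's ``important point'' is only that the mutated \emph{subcategory} is independent of the choices; the individual cone is certainly not (e.g.\ replacing $f$ by $f\oplus 0\colon M\to D\oplus D'$ changes the cone by a $\D$-summand). The version you actually need --- that two left $\D$-approximations $M\to D$ with the \emph{same} codomain $D$ have isomorphic cones --- is true in a Krull--Schmidt category, but its proof goes through minimal approximations; the paper instead takes $f$ minimal from the start, observes that $\alpha f$ is then a minimal left $\D$-approximation of $\alpha M$ (because $\alpha$ is a triangle autoequivalence preserving $\D$), and applies uniqueness of minimal approximations directly. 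That is cleaner and is the argument you should use.

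Second, and more seriously, the closing step does not work as stated. You reduce to showing each indecomposable object of $\mu^+(\M;\D)$ is $\alpha$-invariant, and you propose to obtain these as indecomposable summands of the $\alpha$-invariant objects $D_0\oplus\bigoplus_j N_{M_j}$. But $\alpha$-invariance of a direct sum does not pass to its indecomposable summands: if $N_M\cong Y_1\oplus Y_2$ with $\alpha Y_1\cong Y_2\not\cong Y_1$, then $N_M$ is $\alpha$-invariant while neither summand is, and Krull--Schmidt only tells you that $\alpha$ \emph{permutes} the isomorphism classes of summands of $N_M$, not that it fixes each one. To close this, you would either need the nontrivial input (from the Aihara--Iyama theory) that the cone of a minimal left $\D$-approximation of an indecomposable $M\notin\D$ is itself indecomposable, or, as the paper does, run the argument backward: given an arbitrary indecomposable $N\in\mu^+(\M;\D)$ not in $\D$, take the co-cone $L_N$ of a minimal right $\D$-approximation of $N$, note $L_N\in\ind\M$ via $\mu^-(\mu^+(\M;\D);\D)=\M$, and identify $N$ with $N_{L_N}$; the first part of the proof then applies to $L_N$ and yields $\alpha N\cong N$.
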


\begin{proof} We give the proof for left silting mutation only, as the other half of the argument is dual.  Consider a triangle $$M \stackrel{f}{\to} D \stackrel{g}{\to} N_M \to M[1]$$ with $M \in \M$, $D \in \D$ and $f$ a minimal left $\D$-approximation.  Since $\D$ is $\alpha$-invariant, applying $\alpha$ yields another minimal left $\D$-approximation  
$$\alpha M \stackrel{\alpha f}{\to} \alpha D \stackrel{\alpha g}{\to} \alpha N_M \to \alpha M[1].$$
Since $M$ is $\alpha$-invariant there is an isomorphism $u: M \to \alpha M$, and uniqueness of minimal approximations implies the existence of an isomorphism $D \to \alpha D$ making the diagram commute
$$\xymatrix{M \ar[r]^f \ar[d]^\cong_u & D \ar[r]^g \ar[d]^{\cong} & N_M \ar@{-->}[d]^{\cong} \ar[r] & M[1] \ar[d] \\
\alpha M \ar[r]_{\alpha f} & \alpha D \ar[r]_{\alpha g} & \alpha N_M \ar[r] & \alpha M[1].}$$
Thus there is an induced isomorphism from $N_M$ to $\alpha N_M$ as required.  Since any $N \in \ind\ \mu^+(\M;\D) \setminus \ind \D$ arises in this way (by choosing a right $\D$-approximation of $N$ to get a left $\D$-approximation of $L_N \in \ind \M$ with cone $N$), we see that each indecomposable in $\mu^+(\M;\D)$ is $\alpha$-invariant.
\end{proof}

As a consequence, we obtain the following criterion for $\T$ to fail to be silting connected.  

\begin{coro}\label{coro:SiltingDisconnected} Assume that $\T$ has an $\alpha$-invariant silting subcategory $\M$ and another silting subcategory $\M'$ that is not $\alpha$-invariant.  Then $\M'$ cannot be obtained from $\M$ by iterated silting mutation.  In particular, the action of iterated silting mutation on $\silt(\T)$ is not transitive. 
\end{coro}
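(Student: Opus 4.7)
The plan is to derive this immediately from Proposition~\ref{prop:InvariantSilting} via a straightforward induction on the number of mutation steps. First I would observe that $\alpha$-invariance of a silting subcategory is, by definition, a property of its set of indecomposable objects; so to show it is preserved under iterated mutation, it suffices to check preservation under a single mutation step, which is exactly the content of Proposition~\ref{prop:InvariantSilting} (applied in both the left and right versions, since an iterated sequence may freely mix the two directions).

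Next I would argue by induction on the length $n$ of a putative mutation sequence taking $\M$ to $\M'$. The base case $n = 0$ gives $\M' = \M$, which is $\alpha$-invariant by hypothesis. For the inductive step, suppose $\M''$ is obtained from $\M$ by $n-1$ mutations and is $\alpha$-invariant; then $\M'$ is $\mu^+(\M'';\D)$ or $\mu^-(\M'';\D)$ for some covariantly (respectively contravariantly) finite subcategory $\D$ of $\M''$, and Proposition~\ref{prop:InvariantSilting} ensures $\M'$ is $\alpha$-invariant. This contradicts the assumption that $\M'$ is not $\alpha$-invariant, so no such sequence exists.

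The second assertion is immediate from the first: the hypothesis produces two silting subcategories not linked by any iterated mutation, witnessing the failure of transitivity on $\silt(\T)$.

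There is essentially no obstacle here; the only subtle point is recognizing that ``$\alpha$-invariance of a subcategory'' is preserved under the passage from $\M$ to $\mu^\pm(\M;\D)$ regardless of which $\D \subseteq \M$ one chooses, which is precisely what Proposition~\ref{prop:InvariantSilting} supplies uniformly. No additional hypotheses beyond those already in force (Krull-Schmidt, plus covariant/contravariant finiteness of the $\D$ appearing at each step) are needed.
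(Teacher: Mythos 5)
Your proof is correct and coincides with what the paper intends: the paper states the corollary without proof as an immediate consequence of Proposition~\ref{prop:InvariantSilting}, and the induction on the number of mutation steps (mixing $\mu^+$ and $\mu^-$ as needed) is precisely the argument being left implicit. The observation that $\alpha$-invariance passes through any single mutation regardless of the chosen $\D$ is the right way to see it.
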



\section{Spherical modules}

In this section we develop some simple properties of spherical modules in the derived category of a finite-dimensional algebra of finite global dimension.  First recall that if $\T$ is a Hom-finite triangulated $k$-category with Serre functor $S$, then an object $E$ in $T$ is $d$-{\it spherical} if $S(E) \cong E[d]$ and $$\T(E,E[j]) \cong \left\{ \begin{array}{rl} k, & \mbox{if\ }j = 0,d \\ 0, & \mbox{otherwise} \end{array} . \right.$$
We let $A$ be a finite-dimensional $k$-algebra of finite global dimension, and write $\rmod A$ for the category of finitely-generated right $A$-modules.  We will focus on the case where $\T$ is the bounded derived category $D^b(\rmod A)$, which we may also abbreviate as $D^b(A)$.  Recall that $D^b(\rmod A)$ has a Serre functor $S \cong -\otimes^\bL_A DA$ where $D$ denotes the duality $\Hom_k(-,k)$. 

Associated to any $d$-spherical object $E$ in $D^b(\rmod A)$, Seidel and Thomas have defined an exact auto-equivalence $\Phi_E$ of $D^b(\rmod A)$, known as a {\bf spherical twist} \cite{SeTh}.  For any $X$ in $D^b(\rmod A)$ we can compute $\Phi_E(X)$ as the cone of the natural evaluation map $ev_X$ in the distinguished triangle below
$$\coprod_{j \in \mathbb{Z}} \Hom_{D^b(A)}(E[j],X) \otimes_k E[j] \lra[ev_X] X \to \Phi_E(X) \to .$$
One easily checks that $\Phi_E(E) \cong E[1-d]$.  Additionally, because $\Phi_E$ is an auto-equivalence $\Phi_E(A)$ will be a tilting complex with endomorphism ring isomorphic to $A$.  In order to say more about $\Phi_E(A)$, we now assume that $E \in \rmod A$, i.e., that $E$ is a spherical module.  We also let $e_1, \ldots, e_n$ be a complete set of pairwise orthogonal primitive idempotents for $A$.

\begin{lemma}\label{lem:Emaps}  Assume the $A$-module $E$ is $d$-spherical in $D^b(\rmod A)$.  Then for all $m, j \in \mathbb{Z}$ and each $i$ with $1 \leq i \leq n$ we have isomorphisms
$$\Hom_{D^b(A)}(E[j],\Phi_E^m(e_iA)) \cong \left\{ \begin{array}{rl} D(Ee_i),& \mbox{if}\ j = m(1-d)-d \\ 0, & \mbox{otherwise}
\end{array} . \right.$$

\end{lemma}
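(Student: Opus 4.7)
The plan is to reduce the computation to the case $m = 0$ by exploiting that $\Phi_E$ is an auto-equivalence, and then to handle the case $m = 0$ directly using Serre duality.

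First I would observe that since $\Phi_E$ is an auto-equivalence, it induces isomorphisms on all Hom-spaces, so
$$\Hom_{D^b(A)}(E[j], \Phi_E^m(e_iA)) \cong \Hom_{D^b(A)}(\Phi_E^{-m}(E[j]), e_iA).$$
From $\Phi_E(E) \cong E[1-d]$ we get $\Phi_E^{-m}(E) \cong E[m(d-1)]$, hence
$$\Phi_E^{-m}(E[j]) \cong E[j - m(1-d)].$$
Substituting this, it suffices to establish the isomorphism
$$\Hom_{D^b(A)}(E[\ell], e_iA) \cong \begin{cases} D(Ee_i), & \ell = -d, \\ 0, & \mbox{otherwise,} \end{cases}$$
with $\ell = j - m(1-d)$; the condition $\ell = -d$ is exactly $j = m(1-d) - d$.

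For this residual calculation, I would apply the Serre functor: Serre duality in $D^b(\rmod A)$ gives
$$\Hom_{D^b(A)}(E[\ell], e_iA) \cong D\Hom_{D^b(A)}(e_iA, S(E[\ell])) \cong D\Hom_{D^b(A)}(e_iA, E[\ell+d]),$$
using $S(E) \cong E[d]$ and that $S$ commutes with the suspension. Because $e_iA$ is projective and $E$ is a module concentrated in degree $0$, the space $\Hom_{D^b(A)}(e_iA, E[\ell+d]) = \Ext^{\ell+d}_A(e_iA, E)$ vanishes unless $\ell + d = 0$, in which case it is $\Hom_A(e_iA, E) = Ee_i$. Dualizing produces $D(Ee_i)$ precisely when $\ell = -d$.

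There is no real obstacle here beyond bookkeeping with the shifts. The only subtle point is the invocation of Serre duality together with the fact that an exact auto-equivalence commutes with the Serre functor (so it is legitimate to move $\Phi_E^{-m}$ across the Hom), which is why the reduction to $m = 0$ works uniformly for all $m \in \mathbb{Z}$.
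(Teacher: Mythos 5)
Your proof is correct and uses the same essential ingredients as the paper: Serre duality plus $H^*(E)\cong E$ for the $m=0$ case, and the fact that the auto-equivalence $\Phi_E$ sends $E$ to $E[1-d]$ (hence $\Phi_E^{-m}$ shifts $E$ by $m(d-1)$) to handle general $m$. The only difference is presentational: the paper iterates this shift via induction on $m$ (separately for $m\ge 0$ and $m<0$), whereas you apply $\Phi_E^{-m}$ once to reduce directly to $m=0$, which is a slightly cleaner packaging of the same argument.
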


\begin{proof}  We prove the claim by induction on $m$.  For $m=0$, we have by Serre duality
\begin{eqnarray*} \Hom_{D^b(A)}(E[j],e_iA) & \cong & D(\Hom_{D^b(A)}(e_iA,E[j+d])) \\
& \cong & D(H^{j+d}(E)e_i). \end{eqnarray*}
Now one uses that $H^*(E) \cong H^0(E) \cong E$.

Now assume the claim holds for some $m \geq 0$.  We have 
\begin{eqnarray*} \Hom_{D^b(A)}(E[j],\Phi_E^{m+1}(e_iA)) & \cong & 
\Hom_{D^b(A)}(\Phi_E(E[j+d-1]), \Phi_E(\Phi_E^m(e_iA))) \\
& \cong & \Hom_{D^b(A)}(E[j+d-1],\Phi_E^m(e_iA)),
\end{eqnarray*}
which vanishes unless $j+d-1 = m(1-d)-d$, or equivalently $j = (m+1)(1-d)-d$, in which case it is isomorphic to $D(Ee_i)$ as a $k$-vector space.  The proof for $m<0$ is similar.
\end{proof}

Next we describe the homology of the iterated spherical twists of each indecomposable projective $e_iA$.

\begin{lemma}\label{lem:Homology} Assume that $E_A$ is $d$-spherical in $D^b(\rmod A)$ for some $d \geq 2$.  Then for all $m, j \in \mathbb{Z}$ with $m \geq 0$ and each $i$ with $1 \leq i \leq n$ we have
$$H^j(\Phi_E^m(e_iA)) \cong \left\{ \begin{array}{rl} e_iA, & \mbox{if}\ j=0 \\ D(Ee_i) \otimes_k E, & \mbox{if}\ j = l(d-1)\ \mbox{for}\ 1\leq l \leq m \\ 0, & \mbox{otherwise} \end{array} . \right.$$
\end{lemma}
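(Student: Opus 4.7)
The plan is to proceed by induction on $m \geq 0$. The base case $m = 0$ is immediate since $\Phi_E^0(e_iA) = e_iA$ is concentrated in degree $0$.

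For the inductive step, I will apply the defining triangle of the spherical twist at $X = \Phi_E^m(e_iA)$. By Lemma~\ref{lem:Emaps} the coproduct $\coprod_j \Hom_{D^b(A)}(E[j], \Phi_E^m(e_iA)) \otimes_k E[j]$ has a single nonzero summand, at $j = m(1-d)-d$, equal to $D(Ee_i) \otimes_k E[m(1-d)-d]$. Hence the defining triangle collapses to
$$D(Ee_i) \otimes_k E[m(1-d)-d] \to \Phi_E^m(e_iA) \to \Phi_E^{m+1}(e_iA) \to .$$

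I then take the long exact sequence of cohomology of this triangle. Since $E$ is a module concentrated in degree $0$, the cohomology of the left-hand term is supported in the single degree $m(d-1)+d$, and the numerical identity $m(d-1)+d = (m+1)(d-1)+1$ will be the key bookkeeping fact. The inductive hypothesis places $H^*(\Phi_E^m(e_iA))$ in degrees $\{0,d-1,\ldots,m(d-1)\}$, and the hypothesis $d \geq 2$ forces both $(m+1)(d-1)$ and $(m+1)(d-1)+1$ to lie strictly above $m(d-1)$, so neither sits in the support of $H^*(\Phi_E^m(e_iA))$. Consequently the long exact sequence decouples into isomorphisms $H^j(\Phi_E^{m+1}(e_iA)) \cong H^j(\Phi_E^m(e_iA))$ for all $j \notin \{(m+1)(d-1), (m+1)(d-1)+1\}$; at degree $(m+1)(d-1)+1$ the new cohomology vanishes (trapped between zeros in the LES); and at degree $(m+1)(d-1)$ the connecting map delivers an isomorphism
$$H^{(m+1)(d-1)}(\Phi_E^{m+1}(e_iA)) \cong H^{(m+1)(d-1)+1}\bigl(D(Ee_i) \otimes_k E[m(1-d)-d]\bigr) \cong D(Ee_i) \otimes_k E,$$
which is exactly the new term the formula predicts.

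The main obstacle is keeping the degree bookkeeping straight through the shift $m(1-d)-d$ and the cohomological reindexing. The hypothesis $d \geq 2$ is used precisely to separate the new cohomology of $\Phi_E^{m+1}(e_iA)$ from that already present in $\Phi_E^m(e_iA)$, so that the connecting homomorphism in the LES is an outright isomorphism and no nontrivial extensions arise.
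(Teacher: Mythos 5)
Your proposal is correct and follows essentially the same line as the paper's proof: induction on $m$, the defining triangle of the spherical twist collapsed using Lemma~\ref{lem:Emaps}, the long exact sequence in cohomology, and the hypothesis $d \geq 2$ to keep the degree $(m+1)(d-1)$ and its neighbor outside the support of $H^*(\Phi_E^m(e_iA))$. Your bookkeeping of the shift $m(1-d)-d$ and the identity $m(d-1)+d = (m+1)(d-1)+1$ is accurate, and the extraction of the connecting isomorphism matches what the paper does.
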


\begin{proof}  We again argue by induction on $m$.  For $m=0$, the claim is trivial.  Now assume that it holds for some $m \geq 0$, and consider the triangle used to define $\Phi_E(\Phi_E^m(e_iA))$

$$ D(Ee_i) \otimes_k  E[m(1-d)-d] \to \Phi_E^m(e_iA) \to \Phi_E^{m+1}(e_iA) \to D(Ee_i) \otimes_k E[(m+1)(1-d)].$$

The corresponding long exact sequence in homology (obtained by applying $\Hom_{D^b(A)}(A,-)$) shows that $$H^j(\Phi_E^{m+1}(e_iA)) \cong H^j(\Phi_E^m(e_iA))$$ for all $j \neq (m+1)(d-1), (m+1)(d-1)+1$.  For the remaining $j$, we know that $H^j(\Phi_E^m(e_iA))$ vanishes by the induction hypothesis (this is where we need $d \geq 2$).  Thus $$H^j(\Phi_E^{m+1}(e_iA)) \cong H^j(D(Ee_i) \otimes_k E[(m+1)(1-d)]),$$ which is either $E[(m+1)(1-d)] \otimes_k E$ or $0$ depending on whether $j =(m+1)(d-1)$ or not.
\end{proof}

\section{Examples}

The goal of this section is to describe concrete examples of finite-dimensional algebras $A$ over a field $k$ for which iterated silting mutation does not act transitively on the set of (equivalence classes of) silting objects in $K^b(\proj A)$.  To this end, we fix an integer $n \geq 2$ and let $A = A_n$ be the path algebra of the following quiver $Q = Q_n$
$$\xymatrix{1 \ar[r]<0.5ex>^x \ar[r]<-0.5ex>_y & 2 \ar[r]<0.5ex>^x \ar[r]<-0.5ex>_y & \cdots \ar[r]<0.5ex>^x \ar[r]<-0.5ex>_y & n}$$
modulo the relations $x^2=y^2=0$.  We write $e_i$ for the primitive idempotent of $A$ corresponding to vertex $i$ (for $1 \leq i \leq n$), and $S_i, P_i$ and $I_i$ for the corresponding simple, indecomposable projective and indecomposable injective right $A$-modules, respectively.
It is not hard to see that $\gldim A = n-1$. 

We let $\epsilon \in \aut_k(A)$ be the order two automorphism induced by the automorphism of $Q$ that fixes each vertex and swaps each pair of $x$ and $y$ arrows.  We view $\epsilon$ as acting on $A$ on the right, so that it induces an automorphism $\alpha$ of $\rmod A$ (acting on the left).  By definition, $\alpha$ sends a right $A$-module $M$ to the twisted module $\alpha M = M^\epsilon$ which equals $M$ as an abelian group and has $A$-action given by $m \cdot a = ma^{\epsilon}$ for all $m \in M^\epsilon=M$ and all $a \in A$.  Since $\alpha M = M$ as sets for any module $M$, we can define $\alpha(f) = f$ (as functions) for any morphism $f$.  This action restricts to an automorphism of $\proj A$ and hence also induces automorphisms of $K^b(\proj A)$ and $D^b(\rmod A)$, which we continue to write as $\alpha$.  Since $e_i^\epsilon = e_i$ for all $i$, it is clear that each indecomposable projective $P_i$ is $\alpha$-invariant.

We set $E = e_1A/e_1yA$, which is a uniserial module of length $n$.  Note that $\alpha E \cong e_1A/e_1xA \ncong E$, so $E$ is not $\alpha$-invariant.  When we consider $E \in D^b(\rmod A) \approx K^b(\proj A)$ we may identify $E$ with its minimal projective resolution 
$$P_E = 0 \to P_n \stackrel{y\cdot}{\to} P_{n-1} \stackrel{y\cdot}{\to} \cdots \to P_2 \stackrel{y\cdot}{\to} P_1 \to 0,$$
which has $P_1$ in degree $0$.

\begin{propos}\label{prop:ESpherical}
\begin{enumerate}
\item If $n$ is even, then $E$ and $\alpha E$ are Hom-orthogonal $(n-1)$-spherical objects in $D^b(\rmod A)$.  
 \item If $n$ is odd, then $E$ is exceptional in $D^b(\rmod A)$ and $S(E) \cong \alpha E[n-1]$.  
\end{enumerate}
\end{propos}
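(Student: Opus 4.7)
The plan is to use the minimal projective resolution $P_E$ displayed above to compute all three invariants directly: $\Ext^*(E,E)$, $\Ext^*(E, \alpha E)$, and $S(E)$. Because each indecomposable projective appears in $P_E$ exactly once and every differential is left multiplication by a $y$, the resulting complexes are explicit sequences of one-dimensional spaces that can be analyzed by a parity check.

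For the Ext computations I will apply $\Hom_A(-, N)$ to $P_E$ for $N \in \{E, \alpha E\}$. Since $\Hom_A(P_i, N) = Ne_i$ is one-dimensional, the resulting complex is a sequence of one-dimensional vector spaces with differentials given by right multiplication by $y_{k-1,k}$ on $N$. Writing $p_1, \dots, p_n$ for the natural basis of $E$, the relations $x^2 = y^2 = 0$ together with the definition $E = e_1 A / e_1 y A$ force $p_{k-1} \cdot y = p_k$ in $E$ precisely when $k \geq 3$ is odd, and $p_{k-1} \cdot y = 0$ otherwise. Since $\alpha E = E^\epsilon$, right multiplication by $y$ on $\alpha E$ coincides with right multiplication by $y^\epsilon = x$ on $E$, so the $\alpha E$ case is the same with the roles of odd and even swapped. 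The differentials thus strictly alternate between isomorphisms and zero maps, and a straightforward cohomology count depending on the parity of $n$ produces both Ext computations. In particular, for $n$ even, $\Ext^j(E, \alpha E)$ vanishes identically (and by the $\alpha^2 = \mathrm{id}$ symmetry also $\Ext^j(\alpha E, E) = 0$), establishing Hom-orthogonality, while $\Ext^j(E,E) = k$ exactly for $j \in \{0, n-1\}$; for $n$ odd, $\Ext^j(E,E)$ is concentrated in degree zero, so $E$ is exceptional.

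For $S(E)$ I will use $S \simeq - \otimes^{\bL}_A DA$ and apply the Nakayama functor $\nu$ termwise to $P_E$ to obtain $S(E) \simeq [I_n \to I_{n-1} \to \cdots \to I_1]$ in degrees $-(n-1), \dots, 0$. Each injective $I_k = D(Ae_k)$ has an explicit basis dual to the alternating paths in $Ae_k$, producing two ``strands'' ascending from the socle $S_k$: an $x$-strand and a $y$-strand, each isomorphic as a module to a uniserial module of the form $E$ or $\alpha E$ truncated to length $k$. The Nakayama image of $y_{k-1, k} \cdot : P_k \to P_{k-1}$ is a map $I_k \to I_{k-1}$ which, on each dual-path basis vector, either drops to the corresponding basis vector of $I_{k-1}$ or vanishes according to whether the path begins with $y$ --- again a question of parity. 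To confirm that $S(E)$ has cohomology concentrated in the leftmost degree, I will use Serre duality to write $H^j(S(E))e_i = D\Ext^{-j}(E, e_iA)$ and run the same complex-based analysis as before on $\Hom_A(-, e_iA)$ applied to $P_E$, obtaining vanishing unless $j = -(n-1)$, at which degree each $H^{-(n-1)}(S(E)) e_i$ is one-dimensional. A final parity check will identify $\ker(I_n \to I_{n-1})$ with the $x$-strand of $I_n$ when $n$ is even, and with the $y$-strand when $n$ is odd, giving $S(E) \cong E[n-1]$ or $\alpha E[n-1]$ respectively.

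I expect the main obstacle to be the final combinatorial identification of $\ker(I_n \to I_{n-1})$ with the correct strand, which requires tracking alternating words on the dual-path basis and being careful about parity conventions. The Ext computations themselves are mechanical given the clean form of $P_E$. Putting everything together will yield that, for $n$ even, $E$ is $(n-1)$-spherical, as is $\alpha E$ by the symmetry $x \leftrightarrow y$, and that they are Hom-orthogonal, proving part~(1); for $n$ odd, $E$ is exceptional and $S(E) \cong \alpha E[n-1]$, proving part~(2).
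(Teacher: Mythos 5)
Your proposal follows the paper's approach in all essentials: compute $S(E)$ by tensoring the explicit minimal projective resolution $P_E$ with $DA$, compute $\Ext^*_A(E,E)$ and $\Ext^*_A(E,\alpha E)$ by applying $\Hom_A(-,-)$ to $P_E$ and exploiting the alternating pattern of iso/zero differentials, and identify the kernel of $I_n \to I_{n-1}$ with $E$ or $\alpha E$ via parity. The only real deviation is your use of Serre duality, $H^j(S(E))e_i \cong D\Ext^{-j}(E,e_iA)$, to localize the cohomology of $S(E)$ in degree $1-n$, where the paper reads this off directly from the explicit description of the differentials $d_i$ on the injective complex; both routes are valid, though note that the complex $\Hom_A(P_E, e_iA)$ has two-dimensional terms in degrees $\geq i$ (not one-dimensional as in the $\Ext^*(E,E)$ computation), so the ``same complex-based analysis'' there requires a slightly different bookkeeping.
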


\begin{proof}
To compute $S(E)$ we tensor the projective resolution of $E$ with $DA$ to obtain the complex
$$0 \to I_{n} \stackrel{d_{n-1}}{\to} I_{n-1} \stackrel{d_{n-2}}{\to} \cdots \stackrel{d_{1}}{\to} I_1 \to 0$$
with $d_i = D(\cdot y)$ where $\cdot y$ denotes the map $Ae_i \to Ae_{i+1}$ given by right multiplication by $y$.  A simple calculation shows that $d_i$ maps the left branch of $I_{i+1}$ to $0$ and maps the right branch of $I_{i+1}$ onto the left branch of $I_i$ as in the figure:

			 
 $$\xymatrixrowsep{0.6pc} \xymatrixcolsep{0.5pc} \xymatrix{ 1 \ar@{-}[d]  & \ 1\ \ar@{-}[d]  \ar@{|-->}[rr]^{d_i} & & \ 1 \ \ar@{-}[d] &  1 \ar@{-}[d] \\
			 2 \ar@{-}[d]  & \ 2\ \ar@{-}[d]   \ar@{|-->}[rr] & & \ 2 \  \ar@{-}[d]    & 2 \ar@{-}[d] \\
			 \vdots \ar@{-}[d]  & \vdots \ar@{-}[d]   & & \vdots \ar@{-}[d]  & \vdots \ar@{-}[d] \\
			 i-1 \ar@{-}[d]_y & \ i-1 \ \ar@{-}[d]^x    \ar@{|-->}[rr] & & \ i-1 \ \ar@{-}[d]_x  & i-1 \ar@{-}[dl]^y \\
			 i \ar@{-}[d]_x   & \ i\  \ar@{-}[dl]^y   \ar@{|-->}[rr]  & &\  i \ \\
			  i+1 \\  I_{i+1} \ar[rrr]_{d_i} & & &   I_i}$$

It is clear that this complex is exact, except in degree $1-n$, where its homology is $\ker d_{n-1}$.  This is a length $n$ uniserial module with a length two submodule annihilated by $y$.  If $n$ is even, this uniserial module is isomorphic to $E$, while if $n$ is odd it is isomorphic to $\alpha E$.  Hence
$$S(E) \cong \left\{ \begin{array}{rl} E[n-1], & \mbox{if}\ n\ \mbox{is\ even} \\ \alpha E[n-1], & \mbox{if}\ n\ \mbox{ is\ odd} \end{array} \right.$$

To compute $\Hom^*_{D^b(A)}(E, E) \cong \Ext^*_A(E,E)$ we apply $\Hom_A(-,E)$ to the projective resolution of $E$.  Note that $\Hom_A(e_iA,E) \cong Ee_i$ is one-dimensional for each $i$.  Moreover, the induced map 
$$\Hom_A( y \cdot, E) : \Hom_A(e_iA,E) \to \Hom_A(e_{i+1},E)$$ can be identified with the map given by right multiplication by $y$ from $Ee_i$ to $Ee_{i+1}$.  Thus this map is an isomorphism if $i$ is even or else the zero map when $i$ is odd.  It follows that the complex $\Hom_A(P_E,E)$ has nonzero homology (isomorphic to $k$) only in degree $0$ if $n$ is odd, or else only in degrees $0$ and $n-1$ if $n$ is even.  Similarly if we apply $\Hom_A(-,\alpha E)$ to $P_E$, we get a complex of one-dimensional vector spaces with maps corresponding to right multiplication by $y$ from $\alpha E e_i$ to $\alpha E e_{i+1}$.  This time these maps are isomorphisms whenever $i$ is odd and zero otherwise.  Thus we see that $\Hom_{D^b(A)}(E, \alpha E[j]) \cong \Ext^j_A(E, \alpha E) = 0$ for all $j$ in case $n$ is even.  For completeness, we also note that if $n$ is odd, we have $\Hom_{D^b(A)}(E, \alpha E[j]) \cong \Ext^j_A(E,\alpha E) = 0$ for all $j\neq n-1$, while $\Ext^{n-1}_A(E,\alpha E) \cong k$.
\end{proof}

\begin{rmk}\label{rmk:DihedralConnection} \emph{When $n=2q$ is even, $A$ is the Beilinson algebra (see \cite{Ch}) of the dihedral algebra $$\Lambda = k\gen{x,y}/(x^2,y^2,(xy)^q-(yx)^q)$$ with the usual grading that places $x$ and $y$ in degree $1$.  This connection provides another way to see that $E$ is a spherical object and to study the action of the spherical twist $\Phi_E$ using results of  \cite{Dug, DuTr}.  We will elaborate in the Appendix.}
\end{rmk}

\begin{rmk}\label{rmk:ExceptionalCycle} \emph{When $n$ is odd, part (b) of the above Proposition shows that $(E, \alpha E)$ is an {\it exceptional 2-cycle}, following the terminology of \cite{BPP}, with $k_1 = k_2 = n-1$. It is shown in \cite{BPP} that the twist with respect to the direct sum of objects in an exceptional cycle is an auto-equivalence of $D^b(\rmod A)$.  However, since $E \oplus \alpha E$ is $\alpha$-invariant, this twist will only produce other $\alpha$-invariant tilting complexes.}
\end{rmk} 
 
 Now assume that $n \geq 4$ is even.  As we saw in the previous section, there is an auto-equivalence $\Phi_E$ of $D^b(\rmod A)$.  Naturally, it will induce an automorphism of the silting quiver of $A$, which takes $A$ to the tilting complex $\Phi_E(A)$.  Likewise applying powers of $\Phi_E$ (or its quasi-inverse) to $A$ yields tilting complexes $\Phi_E^m(A)$ for each $m \in \mathbb{Z}$.
 
\begin{propos}\label{prop:MainResult}
Let $A = A_n$ and $E$ be as above, with $n \geq 4$ even.  Then for any distinct integers $j, m$ the tilting complexes $\Phi_E^j(A)$ and $\Phi_E^m(A)$ are not connected by iterated silting mutation.  In particular, the extended silting quiver of $K^b(\proj A)$ has infinitely many connected components.
\end{propos}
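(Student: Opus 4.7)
The plan is to apply Corollary~\ref{coro:SiltingDisconnected} with the automorphism $\alpha$. Observe first that $A = \bigoplus e_iA$ is an $\alpha$-invariant silting object, since $e_i^\epsilon = e_i$ forces each $P_i$ to be $\alpha$-invariant. Next, any auto-equivalence of $\T$---in particular any power of $\Phi_E$---sends silting subcategories to silting subcategories and mutation triangles to mutation triangles. Hence a mutation path from $\Phi_E^j(A)$ to $\Phi_E^m(A)$ can be translated by $\Phi_E^{-j}$ into one from $A$ to $\Phi_E^{m-j}(A)$, and a further application of $\Phi_E^{-(m-j)}$ lets us reverse the sign of the exponent. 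This reduces the claim to showing that $A$ and $\Phi_E^m(A)$ cannot be connected by iterated silting mutation when $m \geq 1$.

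For that, by Corollary~\ref{coro:SiltingDisconnected} it suffices to prove the stronger statement that the object $\Phi_E^m(A)$ itself is not $\alpha$-invariant. I would detect this by inspecting a single cohomology group. Since $E = e_1A/e_1yA$ is uniserial on the vertices $1, \ldots, n$, each $\dim_k Ee_i = 1$ and each summand $D(Ee_i) \otimes_k E$ in Lemma~\ref{lem:Homology} is isomorphic to $E$; taking $d = n-1$ this yields $H^{d-1}(\Phi_E^m(A)) \cong E^n$. If $\Phi_E^m(A)$ were $\alpha$-invariant then, using that $\alpha$ is exact and thus commutes with $H^{d-1}$, we would obtain $(\alpha E)^n \cong E^n$, whence $\alpha E \cong E$ by Krull--Schmidt, contradicting $E \not\cong \alpha E$ as recorded just before Proposition~\ref{prop:ESpherical}.

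The main obstacle, as I see it, is the reduction to $m \geq 1$: Lemma~\ref{lem:Homology} covers only nonnegative powers, so without the auto-equivalence translation one would have to redo the homology computation for negative iterates of $\Phi_E$. With the reduction in place, the rest of the argument is routine, and the infinite disconnectedness of the extended silting quiver follows at once from the family $\{\Phi_E^j(A)\}_{j \in \mathbb{Z}}$ of pairwise unconnected tilting complexes.
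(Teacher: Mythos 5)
Your proposal matches the paper's argument essentially step for step: both reduce to the case $A$ versus $\Phi_E^m(A)$, compute $H^{n-2}(\Phi_E^m(A)) \cong E^n$ via Lemma~\ref{lem:Homology}, and invoke Corollary~\ref{coro:SiltingDisconnected} with the same automorphism $\alpha$. You are somewhat more explicit than the paper at two points where it leaves details implicit -- first, that the reduction to nonnegative powers of $\Phi_E$ is actually required because Lemma~\ref{lem:Homology} is stated only for $m \geq 0$, and second, that the failure of $\alpha$-invariance follows from $E^n \not\cong (\alpha E)^n$ by Krull--Schmidt cancellation combined with exactness of $\alpha$ -- but these are refinements of the same proof rather than a different route.
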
 
 
\begin{proof}  Since $Ee_i$ is one-dimensional for each $i$, by Lemma~\ref{lem:Homology} we have $H^{n-2}(\Phi_E(A)) \cong E^n$, which is not $\alpha$-invariant.  Thus, by Corollary~\ref{coro:SiltingDisconnected}, $\Phi_E(A)$ is not connected to $A$ by iterated silting mutation.  Similarly, no $\Phi_E^m(A)$ can be connected to $A$ for $m \neq 0$.  Consequently, for any $m \neq j$,  the tilting complexes $\Phi_E^j(A)$ and $\Phi_E^m(A)$ are not connected by iterated silting mutation.  For if they were, applying $\Phi_E^{-j}$ would yield a way of connecting $A$ and $\Phi_E^{m-j}(A)$ by iterated silting mutation, which we know is not possible.
\end{proof}
 
By Proposition~\ref{prop:ESpherical}, $E$ and $\alpha E$ are Hom-orthogonal spherical objects.  Hence, by \cite{SeTh}, the equivalences $\Phi_E$ and $\Phi_{\alpha E}$ commute.  Moreover, it is easy to see that $\Phi_{\alpha E} \cong \alpha \Phi_E \alpha^{-1}$.  We further note that $\Phi_{\alpha E}\Phi_E(A)$ is again $\alpha$-invariant.  However, we do not know if it is connected to $A$ by iterated (irreducible) silting mutation.  One can check further that 
$\Phi_{\alpha E}\Phi_E(A) \cong \tau^{-1}(A)$ in $K^b(\proj A)$, where $\tau$ denotes the Auslander-Reiten translation.  We verify this for $n=4$ by a direct calculation below, and justify it more generally in the Appendix.

\begin{example} \emph{To give a more concrete illustration of some of these tilting complexes which are not connected by silting mutation, we now describe the tilting complexes $\Phi_E(A)$ and $\Phi_{\alpha E} \Phi_E(A)$ when $n=4$.  For each $1 \leq i \leq 4$, $\Phi_E(e_iA)$ can be described as a mapping cone of a map $E[-3] \to e_iA$. In the complexes below, we indicate the degree-$0$ term by underlining it.}
 
$$\xymatrix{\Phi_E(e_4A) \ \cong & 0 \ar[r] & 0 \ar[r] & \ul{e_3A} \ar[r]^{y}  & e_2A \ar[r]^{y} &  e_1A \ar[r] & 0 \\
 \Phi_E(e_3A) \ \cong & 0 \ar[r] & e_4A \ar[r]^(.4){\scriptsize \begin{pmatrix}x \\ y \end{pmatrix}} & \ul{(e_3A)^2} \ar[r]^(.55){\scriptsize \begin{pmatrix}0 & y\end{pmatrix}} & e_2A \ar[r]^{y} & e_1 A \ar[r] & 0 \\
 \Phi_E(e_2A) \ \cong & 0 \ar[r] & e_4A \ar[r]^(0.35){\scriptsize \begin{pmatrix}yx \\ y \end{pmatrix}} & \ul{e_2A \oplus e_3 A} \ar[r]^(0.65){\scriptsize \begin{pmatrix}0 & y\end{pmatrix}} & e_2A \ar[r]^{y} & e_1 A \ar[r] & 0 \\
\Phi_E(e_1A) \ \cong & 0 \ar[r] &  e_4A \ar[r]^(0.35){\scriptsize \begin{pmatrix}xyx \\ y \end{pmatrix}} & \ul{e_1A \oplus e_3 A} \ar[r]^(0.65){\scriptsize \begin{pmatrix}0 & y\end{pmatrix}} & e_2A \ar[r]^{y} & e_1 A \ar[r] & 0}$$
 
\emph{Each complex has homology isomorphic to $e_iA$ in degree $0$ and to $E$ in degree $2$.  In particular they are not $\alpha$-invariant.}

 \emph{Next we compute $\Phi_{\alpha E}$ of each of the above complexes.  For each $i$, $\Phi_{\alpha E}(\Phi_E(e_iA))$ is realized as the mapping cone of the unique (up to a scalar multiple) map $\alpha E[-3] \to \Phi_E(e_iA)$.}
 
$$\xymatrixcolsep{1.5pc} \xymatrix{
 \Phi_{\alpha E}\Phi_E(e_4A) \ \cong & 0 \ar[r] & e_4A \ar[rr]^{\scriptsize \begin{pmatrix} x \\ y \end{pmatrix}} & &
 	\ul{(e_3A)^2} \ar[rr]^{\scriptsize \begin{pmatrix} x & 0 \\ 0 & y \end{pmatrix}} & &
 	(e_2A)^2 \ar[rr]^{\scriptsize \begin{pmatrix} x & 0 \\ 0 & y \end{pmatrix}} & &
 	(e_1A)^2 \ar[r] & 0 \\
 \Phi_{\alpha E}\Phi_E(e_3A) \ \cong & 0 \ar[r] & (e_4A)^2 \ar[rr]^{\scriptsize \begin{pmatrix}x & 0 \\ y & x \\ 0 & y \end{pmatrix}} & &
 	\ul{(e_3A)^3} \ar[rr]^{\scriptsize \begin{pmatrix} x & 0 & 0\\ 0 & 0 & y \end{pmatrix}} & &
 	(e_2A)^2 \ar[rr]^{\scriptsize \begin{pmatrix} x & 0 \\ 0 & y \end{pmatrix}} & &
 	(e_1 A)^2 \ar[r] & 0 \\
 \Phi_{\alpha E}\Phi_E(e_2A) \ \cong & 0 \ar[r] & (e_4A)^2 \ar[rr]^(0.35){\scriptsize \begin{pmatrix}x & 0 \\ xy & yx \\ 0 & y \end{pmatrix}} & &
 	\ul{e_3A \oplus e_2A \oplus e_3A} \ar[rr]^(0.6){\scriptsize \begin{pmatrix} x & 0 & 0\\ 0 & 0 & y \end{pmatrix}} & &
 	(e_2A)^2 \ar[rr]^{\scriptsize \begin{pmatrix} x & 0 \\ 0 & y \end{pmatrix}} & &
 	(e_1 A)^2 \ar[r] & 0 \\
 \Phi_{\alpha E}\Phi_E(e_1A) \ \cong & 0 \ar[r] & (e_4A)^2 \ar[rr]^(0.4){\scriptsize \begin{pmatrix}x & 0 \\ yxy & xyx \\ 0 & y \end{pmatrix}} & &
 	\ul{e_3A \oplus e_1 A \oplus e_3A} \ar[rr]^(0.6){\scriptsize \begin{pmatrix} x & 0 & 0\\ 0 & 0 & y \end{pmatrix}} & &
 	(e_2A)^2 \ar[rr]^{\scriptsize \begin{pmatrix} x & 0 \\ 0 & y \end{pmatrix}} & &
 	(e_1 A)^2 \ar[r] & 0 }$$
 
\emph{ Each of the above indecomposable complexes has homology isomorphic to $e_iA$ in degree $0$ and to $E \oplus \alpha E$ in degree $2$.  Furthermore, one can check that each is $\alpha$-invariant.  We do not know if the corresponding tilting complex is connected to $A$ via silting mutation.  However, as mentioned earlier, we can observe that $S(\Phi_{\alpha E}\Phi_E(e_iA)) \cong \Phi_{\alpha E}\Phi_E(e_iA) \otimes_A DA$ gives the injective coresolution of $e_iA[1]$.  Hence} $$\Phi_{\alpha E}\Phi_E(A) \cong S^{-1}(A)[1] \cong \tau^{-1}(A).$$ 
 \end{example}

 \begin{quest} For the algebra $A$ above, is $A$ connected to $\Phi_{E^\alpha}\Phi_E(A) \cong S^{-1}(A)[1]$ by (irreducible) silting mutation?
 \end{quest}

  




 
The same idea we have used here to show that the algebra $A$ is not silting connected can also be applied to show that its trivial extension $T(A)$ is not tilting connected (note that every silting complex in $K^b(\proj T(A))$ is tilting since $T(A)$ is a symmetric algebra).  The quiver of $T(A)$ is obtained from the quiver of $A$ by adding two arrows, also labeled $x$ and $y$, from vertex $n$ back to vertex $1$.  In addition to the relations $x^2 = 0$ and $y^2 = 0$ of $A$, which now extend to include the new arrows as well, $T(A)$ also has the relations $(xy)^q = (yx)^q$ at each vertex.  In particular, there is an order-two automorphism of $T(A)$ that swaps $x$ and $y$ at each vertex.  Abusing notation, will continue to write $\epsilon$ for this automorphism and $\alpha$ for the induced automorphisms of $\rmod T(A)$ and $K^b(\proj T(A))$.

Rickard \cite{DCSE} has shown that if $P$ is a tilting complex in $K^b(\proj \Lambda)$ for any finite-dimensional $k$-algebra $\Lambda$, then $P \otimes_\Lambda T(\Lambda)$ is a tilting complex in $K^b(\proj T(\Lambda))$.  Moreover, the endomorphism ring of $P \otimes_\Lambda T(\Lambda)$ can be identified with the trivial extension of $\Endo_{K^b(\Lambda)}(P)$.  Thus, for each $m \in \mathbb{Z}$, the induced tilting complex $F_m := \Phi_E^m(A) \otimes_A T(A)$ is not $\alpha$-invariant, and hence not connected to $T(A)$ through any sequence of tilting mutations, irreducible or otherwise.  Unfortunately, here it is not obvious whether each of these tilting complexes $F_m$ lies in a different connected component of the tilting quiver of $T(A)$.  For, while Rickard's results show that each $F_m$ induces an auto-equivalence $\rho_m$ of $D^b(\rmod T(A))$, we do not know whether $\rho_m \cong \rho_1^m$ for each $m$, and consequently we do not know if $\rho_m(F_j) \cong F_{j+m}$. One may check that it is no longer the case that these auto-equivalences $\rho_m$ are spherical twists.  One might wonder if the $\rho_m$, being derived auto-equivalences of symmetric algebras, could be instances of periodic twists studied by Grant \cite{Gra}.  However, this also appears to not be the case, since Grant shows that a periodic twist factors as a sequence of tilts by 2-term Okuyama-Rickard complexes \cite[Theorem B]{Gra}.  Consequently, any tilting complex associated to a periodic twist must be connected to $T(A)$ via tilting mutation.  We are thus not aware of whether the auto-equivalences $\rho_m$ of $D^b(\rmod T(A))$ have any characterization intrinsic to this derived category that does not rely on tilting complexes over $A$.
 
Finally, we recall that for any finite-dimensional $k$-algebra $\Lambda$, Koenig and Yang \cite{KY} have established mutation preserving bijections between (equivalence classes of) silting complexes in $K^b(\proj \Lambda)$, simple-minded collections in $D^b(\rmod \Lambda)$, bounded t-structures of $D^b(\rmod \Lambda)$ with length heart, and bounded co-t-structures of $K^b(\proj \Lambda)$.  Thus, the algebras $A$ as in Proposition~\ref{prop:MainResult} and their trivial extensions $T(A)$ yield examples of algebras $\Lambda$ for which mutation does not act transitively on simple-minded collections or on bounded t-structures with length heart in $D^b(\rmod \Lambda)$, or on bounded co-t-structures in $K^b(\proj \Lambda)$.

\section{Appendix: Connection to dihedral algebras}

The examples of spherical objects presented in this article were not discovered randomly, but rather correspond naturally to certain spherical stable twists introduced in \cite{Dug}.  While this correspondence is not necessary in the above exposition, it does present an alternative means of computing the actions of the spherical twist functors and also illustrates how further examples may be found.  For these reasons, we believe it is worthwhile to provide more details about this connection. 

We start by reviewing some general results of Happel and Chen, which we shall need.  For a $\mathbb{Z}$-graded algebra $\Gamma = \oplus_n \Gamma_n$, we write $\gr \Gamma$ for the category of finitely generated $\mathbb{Z}$-graded right $\Gamma$-modules and degree preserving morphisms, and $\stgr \Gamma$ for the associated stable category obtained by factoring out the ideal of morphisms that factor through a projective module.  For graded modules $X$ and $Y$, we will write $\hom_\Gamma(X,Y)$ (resp. $\sthom_{\Gamma}(X,Y)$) and $\ext^i_{\Gamma}(X,Y)$ for the spaces of degree-$0$ morphisms (resp. stable morphisms) and degree-$0$ extensions. If $\Gamma$ is concentrated in degrees $0$ through $c$ (with $\Gamma_c \neq 0$), Chen defines the {\it Beilinson algebra} of $\Gamma$ to be the matrix algebra $$B = b(\Gamma) = \begin{pmatrix} 
\Gamma_0 & \Gamma_1 & \Gamma_2 & \cdots & \Gamma_{c-2} & \Gamma_{c-1} \\
0 & \Gamma_0 & \Gamma_1 & \cdots & \Gamma_{c-3} & \Gamma_{c-2} \\ 
0 & 0 & \Gamma_0 & \cdots & \Gamma_{c-4} & \Gamma_{c-3} \\ 
 \vdots & \vdots & & \ddots & \vdots & \vdots \\ 
0 & 0 & 0 & \cdots & \Gamma_0 & \Gamma_1 \\
0 & 0 & 0 &\cdots & 0 & \Gamma_0 \end{pmatrix}.$$
Furthermore, $\Gamma$ is {\it well-graded} if $e\Gamma_c$ and $\Gamma_c e$ are nonzero for each primitive idempotent $e \in \Gamma_0$.  We note that $b(\Gamma)$ has finite global dimension if and only if $\Gamma_0$ does.  Finally, we note that one way of producing well-graded self-injective (in fact, symmetric) algebras is as trivial extensions.  If $B$ is any algebra, we will regard its trivial extension $T(B) = B \ltimes DB$ as a graded algebra with $B$ in degree $0$ and $DB$ in degree $1$.

\begin{therm}{\cite[Theorem 1.1]{Ch}} For a well-graded self-injective algebra $\Gamma$, there is an equivalence of categories $\gr T(b(\Gamma)) \approx  \gr \Gamma$.  (However, this equivalence typically does not commute with the grading shift.)
\end{therm}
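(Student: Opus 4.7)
The plan is to build an explicit equivalence by unfolding the $T(B)$-grading. Write $B = b(\Gamma)$, and let $f_1, \ldots, f_c$ denote the diagonal idempotents of $B$, so that $f_i B f_j = \Gamma_{j-i}$ for $i \le j$ and is zero for $i > j$. A graded right $T(B)$-module $N$ decomposes as $N = \bigoplus_{k \in \mathbb{Z}} N^{(k)}$, with $B$ preserving each $N^{(k)}$ and $DB$ mapping $N^{(k)} \to N^{(k+1)}$. Using the further idempotent decomposition $N^{(k)} = \bigoplus_{i=1}^{c} N^{(k)} f_i$, I would define a functor $\Phi \colon \gr T(B) \to \gr \Gamma$ by
$$\Phi(N)_d \;:=\; N^{(k)} f_i, \qquad \text{where $d = kc + (i-1)$ with $1 \le i \le c$.}$$

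The $\Gamma$-action on $\Phi(N)$ is assembled piecewise. For $0 \le \ell \le c-1$, multiplication by $\Gamma_\ell$ is implemented by the $(i, i+\ell)$-entry of $B$ (i.e., by the $B$-action) when $i + \ell \le c$, and by the $DB$-action, sending $N^{(k)}f_i \to N^{(k+1)} f_{i+\ell - c}$, when $i + \ell > c$; multiplication by $\Gamma_c$ is implemented purely by $DB$, sending $N^{(k)} f_i \to N^{(k+1)} f_i$. The justification for this prescription is the identification of $DB$ as a $B$-bimodule: because $\Gamma$ is self-injective and well-graded, the Frobenius pairing $\Gamma_i \otimes_{\Gamma_0} \Gamma_{c-i} \to \Gamma_c \to k$ is nondegenerate at every primitive idempotent of $\Gamma_0$, and this gives a canonical bimodule isomorphism (possibly twisted by the graded Nakayama automorphism $\nu$ of $\Gamma$) between $DB$ and the ``wrap-around'' part $\bigoplus_{i \ge j} \Gamma_{\,c+j-i}$. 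This identification is precisely the extra data required to upgrade the windowed $B$-action on each $N^{(k)}$ to a genuine $\Gamma$-action on $\Phi(N)$.

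For the quasi-inverse $\Psi \colon \gr \Gamma \to \gr T(B)$, given $M = \bigoplus_n M_n$, I would set $N^{(k)} := \bigoplus_{i=1}^c M_{kc + i - 1}$, with $f_i$ acting as projection onto $M_{kc+i-1}$. The $B$-action on each $N^{(k)}$ is defined using exactly those $\Gamma$-multiplications that stay within the window $[kc, kc + c - 1]$, while the $DB$-action encodes the multiplications that escape to the next window, again via the bimodule identification above. Functoriality is automatic, and the isomorphisms $\Phi\Psi \cong \mathrm{id}$ and $\Psi\Phi \cong \mathrm{id}$ reduce to unpacking the definitions, since both functors regroup the same underlying vector space according to different grading conventions. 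Finite generation on both sides follows from $\dim_k \Gamma < \infty$ together with the observation that only finitely many windows $N^{(k)}$ are nonzero for a finitely generated $M$.

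The principal obstacle is the structural identification promised in the second paragraph: the precise $B$-bimodule isomorphism $DB \cong \bigoplus_{i \ge j} \Gamma_{\,c+j-i}$, and the verification that the composed action on $\Phi(N)$ satisfies the defining relations of $\Gamma$ (in particular, that the $DB$-contribution composes correctly with the $B$-contribution across window boundaries). This bookkeeping is exactly where the well-graded self-injective hypothesis is essential, and it also explains the parenthetical warning that the equivalence does not commute with grading shifts: a shift by $1$ in the $T(B)$-grading becomes a shift by $c$ in the $\Gamma$-grading, further twisted by the action of the graded Nakayama automorphism $\nu$.
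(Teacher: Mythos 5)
The paper does not contain a proof of this statement: it is quoted verbatim as Theorem 1.1 of Chen \cite{Ch}, so there is no in-text argument to compare against. Your plan --- unfolding a graded $T(B)$-module into windows of length $c$ indexed by the diagonal idempotents of $B = b(\Gamma)$, and encoding the $DB$-action as the $\Gamma$-multiplications that cross a window boundary --- is essentially the covering-theoretic construction underlying Chen's result, and it is the same picture the Appendix attributes to Guo via orbit categories and smash products. Your indexing $d = kc + (i-1)$ is consistent, the case split on whether $i + \ell \le c$ correctly decides a $B$-step versus a $DB$-step, and you are right that two consecutive boundary crossings force $a + b > c$ and hence $\Gamma_{a+b} = 0$, which matches $(DB)^2 = 0$ in the trivial extension.

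The gap you flag is genuine and is exactly where well-gradedness and self-injectivity are consumed. You need a $B$-bimodule isomorphism $DB \cong \bigoplus_{i \ge j} \Gamma_{c+j-i}$ compatible with the $B$-multiplication on both sides across a window boundary; this comes from the Nakayama form of $\Gamma$ restricting, degree by degree, to a perfect pairing $\Gamma_\ell \times \Gamma_{c-\ell} \to k$ at each vertex idempotent, which is what well-gradedness guarantees. A second point, currently confined to a parenthesis, should be promoted: when $\Gamma$ is self-injective but not symmetric, the Nakayama automorphism $\nu$ enters this identification, so your unfolded object is a priori a graded module over a $\nu$-\emph{twisted} trivial extension of $B$ rather than over $T(B)$ itself. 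This agrees with what the Appendix records via Guo: $\Gamma^T$ is in general only a twisted trivial extension of $b(\Gamma)$, and equals $T(b(\Gamma))$ precisely when both are symmetric. To finish in the generality of the theorem you would still have to show that the twisted and untwisted trivial extensions have equivalent categories of $\mathbb{Z}$-graded modules (for instance by re-indexing the windows through $\nu$); that step is the real content of Chen's theorem rather than bookkeeping, and as written your sketch only handles the symmetric case cleanly. Your reading of the parenthetical warning is correct and consistent with the Appendix: the degree-$1$ shift on $\gr T(B)$ becomes the degree-$c$ shift on $\gr \Gamma$ only up to a $\nu$-twist, which is why the paper restricts the identity $\Psi(M(1)) \cong \Psi(M)(c)$ to the case where $\Gamma$ and $\Gamma^T$ are both symmetric.
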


Now, combining  with this Happel's theorem which states that for any algebra $B$ of finite global dimension there is an equivalence of triangulated categories $D^b(\rmod B) \approx \stgr T(B)$, we obtain the following.

\begin{coro}{\cite[Corollary 1.2]{Ch}} Let $\Gamma$ be a well-graded self-injective algebra such that $\Gamma_0$ has finite global dimension.  Then we have equivalences of triangulated categories
$$D^b(\rmod b(\Gamma)) \approx \stgr T(b(\Gamma)) \approx \stgr \Gamma.$$
\end{coro}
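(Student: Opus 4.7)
The plan is to chain together the two results that precede the statement, with the only real work being to lift Chen's abelian-level equivalence to the stable categories. Set $B = b(\Gamma)$ so that the goal splits as two equivalences.

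For the left-hand equivalence $D^b(\rmod B) \approx \stgr T(B)$, first I would invoke the remark preceding the theorem: since $\Gamma_0$ has finite global dimension, so does $B = b(\Gamma)$. With $B$ of finite global dimension, Happel's theorem applies directly to $B$ and yields
$$D^b(\rmod B) \approx \stgr T(B) = \stgr T(b(\Gamma)),$$
where $T(B)$ is graded with $B$ in degree $0$ and $DB$ in degree $1$ as specified just before the corollary. This is essentially a citation, requiring no further work.

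For the right-hand equivalence $\stgr T(b(\Gamma)) \approx \stgr \Gamma$, the input is Chen's Theorem 1.1, which provides an equivalence of graded module categories $\gr T(b(\Gamma)) \approx \gr \Gamma$. I would then argue that any such equivalence automatically descends to the stable categories. Both $T(b(\Gamma))$ and $\Gamma$ are self-injective graded algebras, so both $\gr T(b(\Gamma))$ and $\gr \Gamma$ are Frobenius categories in which the projective objects coincide with the injective objects. Because projectivity (and equivalently injectivity) in an abelian category is characterized intrinsically, any equivalence of abelian categories must send projectives to projectives; hence the equivalence of Theorem 1.1 preserves the ideal of morphisms factoring through a projective and induces a triangle equivalence of the stable quotients $\stgr T(b(\Gamma)) \approx \stgr \Gamma$.

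Concatenating the two equivalences gives the desired chain. The main obstacle, if any, is verifying that Chen's equivalence does indeed respect the Frobenius structure on the nose, i.e., that it is compatible with the suspension functors. However, since both stable categories acquire their triangulated structure canonically from the Frobenius structure of their ambient graded module categories, an abelian equivalence of Frobenius categories always yields a triangle equivalence of their stable quotients, so this compatibility is automatic and no further calculation is needed.
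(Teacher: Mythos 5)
Your proof is correct and follows essentially the same route as the paper: the first equivalence is Happel's theorem applied to $B = b(\Gamma)$ (which has finite global dimension by the preceding remark), and the second comes from descending Chen's Theorem~1.1 to stable categories, the descent being automatic because an equivalence of Frobenius abelian categories preserves projectives and hence induces a triangle equivalence of the stable quotients. The paper treats this corollary as a direct citation of \cite[Corollary 1.2]{Ch} and does not spell out the descent step, but you fill it in correctly.
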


We continue to assume that $\Gamma$ is a well-graded self-injective algebra such that $\Gamma_0$ has finite global dimension, and we set $B = b(\Gamma)$.  
Each of the above categories has a Serre functor, and by uniqueness of Serre functors, these equivalences commute with these Serre functors up to natural isomorphism.  For $D^b(\rmod B)$, the Serre functor is given by $-\otimes^\bL_B DB$.  On $\stgr T(B)$, the Serre functor is given by $\nu \Omega$ where $\nu = -\otimes_{T(B)} D(T(B))$ is the Nakayama functor.  Observe that $D(T(B)) = D(B \oplus DB) \cong DB \oplus B$ with $DB$ in degree $0$ and $B$ in degree $-1$.  Hence $D(T(B)) \cong T(B)(1)$ as graded $(T(B),T(B))$-bimodules.  Consequently, $\nu_{T(B)}$ is isomorphic to the grading shift functor $-(1)$ on $\gr T(B)$.  Hence the Serre functor on $\stgr T(B)$ is isomorphic to $\Omega (1)$.  Passing to $\stgr \Gamma$, the Nakayama functor $\nu_{T(B)}$ will correspond to the grading shift $-(c)$, which we note is not the Nakayama functor of $\Gamma$, while the Serre functor on $\stgr \Gamma$ must be isomorphic to $\Omega (c)$. 

In \cite{Guo}, Guo uses covering theory to describe the relationship between $\Gamma$ and $T(b(\Gamma))$ more concretely.  Nameley, he defines an algebra $\Gamma^T$ as the orbit algebra of the category $\gr \Gamma$ with respect to the Nakayama automorphism.  If $\Gamma$ is symmetric, its Nakayama automorphism coincides with the grading shift $-(c)$, and hence in this case the algebra $\Gamma^T$ is a Galois covering of $\Gamma$ with group $\mathbb{Z}/c$.  Thus $\Gamma^T$ may also be realized as a smash product $\Gamma \# k(\mathbb{Z}/c)^*$ with respect to the natural grading on $\Gamma$ viewed as a $\mathbb{Z}/c$-grading.  By \cite[Theorems 5.8, 5.1]{Guo}, $\Gamma^T$ is isomorphic to a twisted trivial extension of $b(\Gamma)$.  If $\Gamma^T$ is also symmetric, it must be isomorphic to the (untwisted) trivial extension $T(b(\Gamma))$.

Thus, in case $\Gamma$ and $\Gamma^T$ are both symmetric, writing $\Psi$ for the equivalence $\gr T(b(\Gamma)) \to \gr \Gamma$, we have $$\Psi (M(1)) \cong \Psi(M)(c)$$ for all $M \in \gr T(b(\Gamma))$.

  We now return to the notation used earlier in the paper.  In particular $n = 2q$ is an even integer, $A =A_n$ is the algebra introduced in Section 4, and $$\Lambda = \Lambda_{2q} = k\gen{x,y}/(x^2,y^2,(xy)^q-(yx)^q)$$ is a local dihedral algebra.  Note that $\Lambda$ can be graded by placing $x$ and $y$ in degree $1$.  With this grading, $\Lambda$ is a well-graded symmetric algebra concentrated in degrees $0$ through $2q$.  By Theorem 5.1 of \cite{Guo}, it is easy to see that $A$ is the Beilinson algebra of $\Lambda$.  Moreover, here $\Lambda$ and $\Lambda^T$ are both symmetric, so we see that $T(A)$ is a Galois covering of $\Lambda$ with Galois group $\mathbb{Z}/n$, and the categories of $\mathbb{Z}$-graded modules over $T(A)$ and $\Lambda$ are equivalent, with the grading shift over $T(A)$ corresponding to the $n^{th}$ power of the grading shift over $\Lambda$.  

An object $X \in \stgr T(A)$ is $d$-Calabi-Yau if and only if $S(X) \cong X[d]$, or equivalently, $X(1) \cong \Omega^{-d-1}(X)$.  Equivalently, if we write $\tilde{X}$ for the corresponding object in $\stgr \Lambda$, we see that $\tilde{X}$ is $d$-Calabi-Yau if and only if $\tilde{X}(n) \cong \Omega^{-d-1}(\tilde{X})$.  The $(n-1)$-spherical object $E$ in $D^b(\rmod A)$ corresponds to the module $T = \Lambda/y\Lambda$ in $\stgr \Lambda$.  It is easy to see directly that $\Omega^{-1}(T) \cong T(1)$ as graded $\Lambda$-modules, and hence that $T$ is $(n-1)$-Calabi-Yau.  Furthermore, it is straightforward to verify that $\stEndo_{\Lambda}(T) \cong k[u]/(u^2)$ where $u$ is the unique (up to a scalar multiple) degree $2q-1$ endomorphism of $T$. Consequently, in $\stgr \Lambda$ we have
$$\ext^i_{\Lambda}(T,T) = \sthom_{\Lambda}(T,T[i]) = \sthom_{\Lambda}(T,T(i)) \cong \left\{ \begin{array}{ll} k, & i=0, n-1 \\
0, & \mbox{otherwise}.\end{array} \right.$$
We denote the corresponding spherical twist in $\stgr \Lambda$ by $\Phi_T$.  In \cite{Dug}, we defined a {\it spherical stable twist} $\sigma_y$ which is an auto-equivalence of the (ungraded) stable category $\stmod \Lambda$.  As both $\Phi_T$ and $\sigma_y$ are defined using cones of right $\add(T)$-approximations, it is clear that their actions agree on graded $\Lambda$-modules.  Likewise, the spherical object $\alpha E$ corresponds to $T' = \Lambda/x\Lambda$, and the spherical twist $\Phi_{T'}$ on $\stgr \Lambda$ can be viewed as a graded version of the spherical stable twist $\sigma_x$ on $\stmod \Lambda$.  Now, as $\sigma_x \sigma_y$ coincides with $\tau^{-1}$ on objects \cite[Example 7.1]{Dug}, it follows that $\Phi_T' \Phi_T$ also coincides with $\tau^{-1}$ on objects in $\stgr \Lambda$.  Carrying this information back to $D^b(A)$, we see that $\Phi_{\alpha E}\Phi_E$ also coincides with $\tau^{-1}$ on objects.  While we do not know if this gives a functorial factorization of $\tau^{-1}$, the fact that similar factorizations have appeared elsewhere (see \cite[Cor. 5.5]{BPP} for instance) suggests that something deeper may underlie this phenomenon.


 \end{document}